\documentclass[11pt,letterpaper]{amsart}

\usepackage[T1]{fontenc}
\usepackage[utf8]{inputenc}
\usepackage{amsmath,amssymb,amsthm,mathtools}
\usepackage{microtype}
\usepackage{mathpazo}
\usepackage[margin=1.08in]{geometry}
\usepackage{enumitem,booktabs,array,longtable}
\usepackage{cite}
\usepackage{xcolor}
\usepackage[colorlinks=true,linkcolor=black,citecolor=black,urlcolor=black,
pdftitle={Binary dimension-free discretization and complexification on real cubes},
pdfsubject={Exact transfer between binary discretization and polynomial complexification},
pdfkeywords={dimension-free discretization, polynomial complexification,
Remez inequalities, Boolean cube, multiaffine polynomials, Chebyshev polynomials}
]{hyperref}

\allowdisplaybreaks
\numberwithin{equation}{section}
\setlist{itemsep=.25em,topsep=.45em}

\newtheorem{theorem}{Theorem}[section]
\newtheorem{maintheorem}{Theorem}

\newtheorem{lemma}[theorem]{Lemma}

\theoremstyle{remark}

\newcommand{\C}{\mathbb C}
\newcommand{\R}{\mathbb R}
\newcommand{\T}{\mathbb T}
\newcommand{\N}{\mathbb N}
\newcommand{\D}{\mathbb D}

\title[Binary discretization and cube complexification]
{Binary dimension-free discretization and polynomial complexification on real cubes}

\author[L.~M.~Casta\~no-Mar\'in]{Luis Miguel Casta\~no-Mar\'in}
\address{Departamento de Matem\'aticas\\
Universidad Nacional de Colombia\\
Bogot\'a, Colombia}
\email{lucastanom@unal.edu.co}

\author[D.~N\'u\~nez-Alarc\'on]{Daniel N\'u\~nez-Alarc\'on}
\address{Departamento de Matem\'aticas\\
Universidad Nacional de Colombia\\
Bogot\'a, Colombia}
\email{dnuneza@unal.edu.co}

\author[J.~Santos]{Joedson Santos}
\address{Departamento de Matem\'atica\\
Universidade Federal da Para\'iba\\
Centro de Ci\^encias Exatas e da Natureza, Jo\~ao Pessoa, PB, Brazil}
\email{joedson.santos@academico.ufpb.br}

\subjclass[2020]{Primary 43A46; Secondary 42A05, 46G25}
\keywords{Dimension-free discretization, polynomial complexification,
norming sets, Remez inequalities, Boolean cube, multiaffine polynomials,
Chebyshev polynomials}

\begin{document}
\begin{abstract}
Let $C(d,2)$ be the optimal dimension-free ratio between the polytorus
and Boolean-cube norms of complex multiaffine polynomials of degree at
most $d$. For every $d$, $C(d,2)$ equals the unrestricted
complexification constant of the real cube for complex polynomials of
degree at most $d$. Moreover,
$\lim_{d\to\infty}C(d,2)^{1/d}=1+\sqrt2$, and for every $d\ge3$,
$\beta(1-6/(5d))(1+\sqrt2)^d\le C(d,2)\le(1+\sqrt2)^d$,
where
$\beta=15\pi/[2(5\sqrt{26}+\log(5+\sqrt{26}))]
=0.8473222863\ldots$.
\end{abstract}

\maketitle

\section{Introduction and main results}
Becker, Klein, Slote, Volberg, and Zhang \cite{BKSVZ} studied dimension-free discretization of polynomial uniform norms by product sets whose size is controlled by the maximal individual degree rather than by the total degree.  Write $\D=\{z\in\C:|z|\le1\}$ and $\T=\{z\in\C:|z|=1\}$.
For a polynomial $P:\C^N\to\C$ and a nonempty compact set $E\subset\C^N$, put
$\|P\|_E=\max_{z\in E}|P(z)|$.
For analytic polynomials of total degree at most $d$ and individual degree at most $K-1$, they proved dimension-free estimates of the form
\begin{equation}\label{eq:general-discretization}
 \|P\|_{\D^N}\le C(X)^d\|P\|_{X^N},
\end{equation}
where $X\subset\D$ has $K$ points and the constant is independent of $N$.  They also showed that exponential dependence on $d$ is unavoidable for product sampling sets of this minimal cardinality.  For products of $K$th roots of unity, this work continues the dimension-free Remez inequality of Slote, Volberg, and Zhang \cite{SloteRemez}.
Related dimension-free Remez phenomena for analytic functions were developed by Nazarov, Sodin, and Volberg \cite{NSV2003}.

For the smallest nontrivial sampling size, $K=2$,
\[
 \Omega_2=\{-1,1\}.
\]
The relevant polynomials have individual degree at most one.  Iterating the one-variable maximum modulus principle \cite[Chap.~VII]{Conway} in the $N$ coordinates gives $\|P\|_{\D^N}=\|P\|_{\T^N}$ for every analytic polynomial $P:\C^N\to\C$.  For a multiindex $\alpha=(\alpha_1,\ldots,\alpha_N)$, put
$|\alpha|=\alpha_1+\cdots+\alpha_N$ and
$z^\alpha=z_1^{\alpha_1}\cdots z_N^{\alpha_N}$.  For integers $N\ge d\ge1$, let
\[
 \mathcal P_{N,d}^{\mathrm{mult}}
 :=\left\{P:\C^N\to\C:
 P(z)=\sum_{\substack{\alpha\in\{0,1\}^N\\ |\alpha|\le d}}
 c_\alpha z^\alpha,\ c_\alpha\in\C\right\}.
\]
For the binary sampling set, the corresponding optimal dimension-free constant is
\begin{equation}\label{eq:uniform-transfer-constant}
 C(d,2):=\sup_{N\ge d}\ \sup_{0\ne P\in\mathcal P_{N,d}^{\mathrm{mult}}}
 \frac{\|P\|_{\T^N}}{\|P\|_{\{-1,1\}^N}}.
\end{equation}
Restriction to $\{-1,1\}^N$ identifies these polynomials with Fourier--Walsh polynomials of degree at most $d$.  Adding unused variables shows that the restriction $N\ge d$ does not change the constant.

For a multiaffine polynomial, the modulus is convex in each real coordinate,
and therefore
\begin{equation}\label{eq:multiaffine-real-cube-norm}
\|P\|_{[-1,1]^N}=\|P\|_{\{-1,1\}^N}.
\end{equation}
Consequently, every quotient in \eqref{eq:uniform-transfer-constant}
has denominator $\|P\|_{[-1,1]^N}$.

An early sharp result in a related direction is due to Visser
\cite{Visser}. If
\[
P=P_0+P_1+\cdots+P_d
\]
is a polynomial with real coefficients on $[-1,1]^N$, where $P_j$ is
$j$-homogeneous, then Visser's theorem gives
\begin{equation}\label{eq:visser-top-homogeneous}
\|(P_d)_{\C}\|_{\T^N}\le 2^{d-1}\|P\|_{[-1,1]^N}.
\end{equation}
The constant $2^{d-1}$ is sharp, already in one variable by the Chebyshev
polynomial $T_d:\C\to\C$, normalized by
\[
T_d(\cos\theta)=\cos(d\theta)\qquad(\theta\in\R).
\]
In \eqref{eq:visser-top-homogeneous}, only the top homogeneous part of a
real polynomial is complexified. The constant $C(d,2)$ concerns the full
multiaffine polynomial and allows complex coefficients.

Dimension-free comparisons between real and complex sup norms of polynomials
in many variables were studied explicitly by Aron, Beauzamy, and Enflo
\cite{AronBeauzamyEnflo}. In a Banach-space setting, Lacruz \cite{Lacruz}
formulated the associated norming constants for arbitrary polynomials and
related their asymptotic behavior to extremal functions and capacities. The corresponding constants are defined without multiaffinity or
Boolean sampling. Polynomial complexification on real Banach spaces is treated,
for instance, in \cite{MunozSarantopoulosTonge,DineenBook} and in the recent
work \cite{Rodriguez2025}. In several complex variables, pluripotential estimates
of Siciak \cite{Siciak1981} and Klimek \cite{Klimek} control polynomial growth
away from the real cube. In particular, for every
polynomial $H\in\C[t_1,\ldots,t_s]$ of total degree at most $d$, the estimate
recorded by Defant, Masty{\l}o, and P\'erez
\cite[Lemma~1.3(2)]{DefantMastyloPerez} gives
\begin{equation}\label{eq:Klimek-cube-complexification}
\|H\|_{\T^s}\le(1+\sqrt2)^d\|H\|_{[-1,1]^s}.
\end{equation}

The exponential base $1+\sqrt2$ in \eqref{eq:Klimek-cube-complexification}
already occurs in one variable. The Bernstein--Walsh inequality for $[-1,1]$; see, for example,
\cite[Chap.~5]{Ransford}, together with the Chebyshev polynomials, yields
\[
\lim_{d\to\infty}
\left(
\sup_{\substack{0\ne h\in\C[t]\\ \deg h\le d}}
\frac{\|h\|_{\D}}{\|h\|_{[-1,1]}}
\right)^{1/d}
=1+\sqrt2.
\]
For $d\ge1$, set
\begin{equation}\label{eq:Gamma-definition}
\Gamma_d:=
\sup_{s\ge1}\
\sup_{\substack{0\ne H\in\C[t_1,\ldots,t_s]\\ \deg H\le d}}
\frac{\|H\|_{\T^s}}{\|H\|_{[-1,1]^s}}.
\end{equation}
\begin{maintheorem}\label{thm:complexification-identity}
For every positive integer $d$,
\begin{equation}\label{eq:intro-unrestricted-complexification}
C(d,2)=\Gamma_d.
\end{equation}
Equivalently,
\[
C(d,2)=
\sup_{s\ge1}\
\sup_{\substack{0\ne H\in\C[t_1,\ldots,t_s]\\ \deg H\le d}}
\frac{\|H\|_{\T^s}}{\|H\|_{[-1,1]^s}}.
\]
\end{maintheorem}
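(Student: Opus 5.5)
The identity splits into the two inequalities $C(d,2)\le\Gamma_d$ and $\Gamma_d\le C(d,2)$. The first is immediate from the definitions; the second requires realizing an arbitrary polynomial on the cube as a limit of multiaffine polynomials on Boolean cubes, by a block-averaging substitution.

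\emph{Step 1: $C(d,2)\le\Gamma_d$.} Let $0\ne P\in\mathcal P_{N,d}^{\mathrm{mult}}$. Then $P$ is in particular a polynomial in $s=N$ variables of total degree at most $d$. By \eqref{eq:multiaffine-real-cube-norm}, $\|P\|_{\{-1,1\}^N}=\|P\|_{[-1,1]^N}$. Hence the quotient in \eqref{eq:uniform-transfer-constant} equals $\|P\|_{\T^N}/\|P\|_{[-1,1]^N}$, which is at most $\Gamma_d$ by \eqref{eq:Gamma-definition}.

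\emph{Step 2: $\Gamma_d\le C(d,2)$.} Fix $0\ne H\in\C[t_1,\ldots,t_s]$ with $\deg H\le d$, and a point $w\in\T^s$. For a large integer $m$, introduce Boolean variables $x_{j,i}$, where $1\le j\le s$ and $1\le i\le m$. Set
\[
P_m(x)=H\Bigl(\tfrac1m\textstyle\sum_i x_{1,i},\ldots,\tfrac1m\sum_i x_{s,i}\Bigr).
\]
Let $Q_m$ be the multiaffine reduction of $P_m$, obtained by replacing every $x_{j,i}^{2}$ by $1$. This reduction does not raise the degree, so $Q_m\in\mathcal P^{\mathrm{mult}}_{sm,d}$, and $sm\ge d$ once $m$ is large. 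Moreover $Q_m=P_m$ on $\{-1,1\}^{sm}$. Each block average lies in $[-1,1]$, so
\[
\|Q_m\|_{\{-1,1\}^{sm}}\le\|H\|_{[-1,1]^s}.
\]

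\emph{Step 3: lower bound for the torus norm.} Evaluate $Q_m$ at the point $z_{j,i}=w_j$ for all $i$; this point lies in $\T^{sm}$. Consider a monomial $t^\alpha$ of $H$ with $|\alpha|=k\le d$. Expanding it in the block variables gives a sum over $k$-tuples of indices with weight $m^{-k}$ each.
\begin{itemize}
\item Tuples whose indices are pairwise distinct within each block survive the reduction unchanged and contribute exactly $w^\alpha$. Their total weight is $\prod_j m^{\underline{\alpha_j}}/m^{\alpha_j}=1-O(k^2/m)$, where $m^{\underline{r}}=m(m-1)\cdots(m-r+1)$.
\item The remaining tuples reduce to monomials of modulus $1$ at the chosen point. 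Their total weight is $O(k^2/m)$.
\end{itemize}
Hence $Q_m(w,\ldots,w)\to H(w)$ as $m\to\infty$, with error at most $O(d^2/m)$ times the sum of the moduli of the coefficients of $H$. Therefore $\liminf_m\|Q_m\|_{\T^{sm}}\ge|H(w)|$. Combining this with Step 2 gives
\[
C(d,2)\ge|H(w)|/\|H\|_{[-1,1]^s}.
\]
The denominator is nonzero because $H\ne0$. Taking the supremum over $w\in\T^s$, over $H$, and over $s$ yields $\Gamma_d\le C(d,2)$.

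The main obstacle is Step 3. The point is that the multiaffine reduction must not destroy the torus values. The counting of tuples with repeated indices handles this, since their proportion vanishes as $m\to\infty$ while $d$ stays fixed.
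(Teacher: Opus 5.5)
Your proof is correct. It is a dual form of the paper's construction rather than the same one.

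\textbf{How the paper does it.} The paper also splits the variables into blocks of size $N$ and lets $N\to\infty$. It lifts each power $t_j^{k}$ to the normalized elementary symmetric polynomial $U_{N,k}(z^{(j)})$, i.e.\ it keeps only the distinct-index tuples and renormalizes. This lift reproduces $H$ exactly on the diagonal, $H_N(\zeta_1,\ldots,\zeta_1;\ldots;\zeta_s,\ldots,\zeta_s)=H(\zeta)$. The collision error is paid on the Boolean side through Lemma~\ref{lem:symmetric-lift-approximation}, giving $\|H_N\|_{\{-1,1\}^{sN}}\le\|H\|_{[-1,1]^s}+E(H)/N$.

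\textbf{How you do it.} You take the multiaffine reduction of $H$ composed with the block means. This makes the Boolean bound exact and lemma-free, and you spend the same collision count at the torus point instead. The reduced block power equals $\frac{m^{\underline{k}}}{m^k}U_{m,k}$ plus lower-degree symmetric terms of total weight $1-m^{\underline{k}}/m^k$. So your $Q_m$ and the paper's $H_m$ differ exactly by the repeated-index tuples.

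\textbf{Comparison.} Both routes give the same error bound $E(H)/m$, uniformly in $w\in\T^s$, and hence the same inequality \eqref{eq:full-cube-lift}. The paper's version exhibits $H$ literally as a diagonal restriction of a multiaffine polynomial. Yours identifies the lift as the Fourier--Walsh expansion of $x\mapsto H(\tau_1(x),\ldots,\tau_s(x))$, with $\tau_j$ the block means.

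\textbf{One point to make explicit.} If $H(w)\ne0$, then $Q_m(w,\ldots,w)\ne0$ for large $m$. So $Q_m\ne0$, and being multiaffine it has positive norm on $\{-1,1\}^{sm}$. This makes it admissible in \eqref{eq:uniform-transfer-constant}. The case $H(w)=0$ is trivial.
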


Combining Theorem~\ref{thm:complexification-identity} with
\eqref{eq:Klimek-cube-complexification} gives
\begin{equation}\label{eq:known-binary-remez}
C(d,2)\le(1+\sqrt2)^d.
\end{equation}
On the other hand, taking $s=1$ and $H=T_d$ in
Theorem~\ref{thm:complexification-identity} gives
\[
C(d,2)\ge |T_d(i)|
=\frac12\left|
\bigl(i(1+\sqrt2)\bigr)^d+
\bigl(i(1-\sqrt2)\bigr)^d
\right|.
\]
The one-variable Chebyshev test gives the normalized asymptotic lower
factor $1/2$. Theorem~\ref{thm:sharp-binary-cost} gives the stronger uniform
bound
\[
  C(d,2)\ge
  \beta\left(1-\frac{6}{5d}\right)(1+\sqrt2)^d,
  \qquad d\ge3,
\]
with \(\beta>1/2\).

Set
\begin{equation}\label{eq:asymptotic-beta}
\beta:=
\frac{15\pi}{2\bigl(5\sqrt{26}+\log(5+\sqrt{26})\bigr)}
\approx0.8473222863>\frac\pi4.
\end{equation}

\begin{maintheorem}\label{thm:sharp-binary-cost}
For every integer $d\ge3$,
\begin{equation}\label{eq:improved-exponential-sandwich}
\beta\left(1-\frac6{5d}\right)(1+\sqrt2)^d
\le C(d,2)\le(1+\sqrt2)^d.
\end{equation}
In particular,
\begin{equation}\label{eq:improved-normalized-limits}
\beta\le
\liminf_{d\to\infty}\frac{C(d,2)}{(1+\sqrt2)^d}
\le
\limsup_{d\to\infty}\frac{C(d,2)}{(1+\sqrt2)^d}
\le1,
\end{equation}
and
\begin{equation}\label{eq:sharp-exponential-rate}
\lim_{d\to\infty}C(d,2)^{1/d}=1+\sqrt2.
\end{equation}
\end{maintheorem}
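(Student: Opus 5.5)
The upper bound in \eqref{eq:improved-exponential-sandwich} is immediate: by Theorem~\ref{thm:complexification-identity} we have $C(d,2)=\Gamma_d$, and \eqref{eq:Klimek-cube-complexification} bounds every quotient in \eqref{eq:Gamma-definition} by $(1+\sqrt2)^d$. Once the lower bound is proved, \eqref{eq:improved-normalized-limits} follows because $1-6/(5d)\to1$. Then \eqref{eq:sharp-exponential-rate} follows by taking $d$th roots, since $\beta^{1/d}(1-6/(5d))^{1/d}\to1$. So the whole content is the lower bound $\Gamma_d\ge\beta(1-\frac{6}{5d})(1+\sqrt2)^d$. By Theorem~\ref{thm:complexification-identity} it suffices to exhibit, for each $d\ge3$, one polynomial $H$ in some number $s$ of variables with $\deg H\le d$ and the required ratio $\|H\|_{\T^s}/\|H\|_{[-1,1]^s}$.

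The one-variable test $T_d(i)$ loses a factor $1/2$. The reason is that $T_d(t)=\frac12\bigl(w^d+w^{-d}\bigr)$ with $w=t+\sqrt{t^2-1}$, and on $[-1,1]$ the two branches have equal size, while at $t=i$ only one branch survives. To beat $1/2$ I would use extra variables to suppress the cancelling branch. The plan is to take $s$ variables and a polynomial of the form
\[
H(t)=\sum_{k} a_k\,T_{k}(t_1)\,Q_{d-k}(t_2,\dots,t_s),
\]
or, more simply, a one-parameter family $H_\lambda$ obtained by composing a Chebyshev-type polynomial with an affine-plus-product expression in two or three variables. The goal is that on the real cube the expression stays in a region where a sharp Chebyshev/Bernstein bound gives $|H|\le1$. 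At a well-chosen torus point (coordinates $\pm i$ or $e^{i\theta}$), the dominant branch should instead appear with weight close to $1$ rather than $1/2$.

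The shape of $\beta$ guides the choice of parameters. Its denominator satisfies
\[
5\sqrt{26}+\log(5+\sqrt{26})=2\int_0^5\sqrt{1+x^2}\,dx .
\]
So I expect the extremal construction to involve a continuous weight or profile parametrized by $x\in[0,5]$. The constant $\beta$ would then come from a ratio of the form $\frac{15\pi}{4}\big/\int_0^5\sqrt{1+x^2}\,dx$ after a Riemann-sum (or Fejér/Jackson-kernel) limit in which the number of variables or summands grows with $d$. Concretely I would optimize the coefficients $a_k$ against a kernel whose generating function is explicit, evaluate the torus value exactly in closed form, and bound the real-cube sup norm by a positivity/convexity argument for the kernel. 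The finite-$d$ correction $1-\frac{6}{5d}$ should then come from comparing the discrete sum with the integral, using an explicit bound on the error term, valid for $d\ge3$.

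The main obstacle is the real-cube estimate $\|H\|_{[-1,1]^s}\le1$ for the constructed family: the torus side is a direct computation, but the cube side requires a sharp inequality, not merely an asymptotic one. My first approach would be to write $H$ on the cube as an average of Chebyshev polynomials $T_m(\cos\phi)=\cos(m\phi)$ with nonnegative weights, so that $|H|\le\sum|a_k|$ trivially. If that fails to be sharp enough, I would choose the kernel so that this average is a Fejér-type positive trigonometric kernel, whose sup is attained at $\phi=0$ and computable exactly.
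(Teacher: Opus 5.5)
\paragraph{Verdict: genuine gap in the lower bound.} Your upper bound and your deduction of the two limit statements are fine: they follow from Theorem~\ref{thm:complexification-identity}, the bound $(1+\sqrt2)^d$ on $\Gamma_d$, and $1-6/(5d)\to1$. The gap is the lower bound, which is the entire content of the theorem. You never write down a test polynomial, and the one concrete mechanism you propose for the cube estimate cannot get past the constant $1/2$.

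Suppose $H=\sum_k a_kB_k$ with $\|B_k\|_{[-1,1]^s}\le1$, and you use $\|H\|_{[-1,1]^s}\le\sum_k|a_k|$. This is also exactly the sup of a nonnegative Fej\'er-type cosine average, attained at $\phi=0$. Then $|H(\zeta)|/\sum_k|a_k|\le\max_k|B_k(\zeta)|$, so you can never beat the best single block. Take the blocks to be products of Chebyshev polynomials in separate variables, as in your ansatz, and put $\rho=1+\sqrt2$. For $\zeta\in\T$ and $j\ge1$ one has $|T_j(\zeta)|\le\frac12(\rho^j+\rho^{-j})\le(2-\sqrt2)\rho^j$. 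Hence a product with at least two nonconstant factors is at most $(2-\sqrt2)^2\rho^d<\frac12\rho^d$, and a single factor is at most $\frac12(\rho^d+\rho^{-d})$. The normalized ratio therefore stays at most $\frac12(1+\rho^{-2d})<\beta$.

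\paragraph{The missing idea.} What you need is cancellation on the cube that the triangle inequality cannot see. You correctly anticipate a Riemann-sum limit in a growing number of variables, but the paper gets the cancellation in two specific ways.

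First, it spreads complex phases over many independent blocks. Suppose a block $B$ maps the real cube into a compact convex set $K$ with support function $h_K$. Then
\[
\Bigl\|\frac1m\sum_{j=1}^m e^{i\varphi_j}B(\text{$j$th block})\Bigr\|_{\text{cube}}\le\max_{\psi}\frac1m\sum_{j=1}^m h_K(\psi-\varphi_j).
\]
For equispaced $\varphi_j$ the right-hand side tends to the mean of $h_K$, i.e.\ the perimeter of $K$ divided by $2\pi$, rather than to $\max_K|w|$. On the torus, each block is evaluated at its own Joukowski point. That point is chosen so that $e^{i\varphi_j}$ times the dominant branch is real and positive, which costs only a factor $e^{-O(1/d)}$ against $\rho^d$. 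With $B=T_d$ and $K=[-1,1]$ this already gives $\frac\pi4(1-\frac1{3d})\rho^d$.

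Second, it uses a block with complex coefficients whose real and imaginary parts are ``orthogonal'' on the cube. The block
\[
Q_{d,\kappa,\sigma}(x,y)=\frac{T_d(x)-T_d(y)}2-\frac{i\sigma\kappa}2\bigl(T_k(x)T_\ell(y)+T_\ell(x)T_k(y)\bigr)
\]
maps $[-1,1]^2$ into $K_\kappa=\{u+iv:|u|+v^2/\kappa^2\le1\}$. Its aligned dominant part has size $\frac{2+\kappa}4\rho^d$, which gives $C(d,2)/\rho^d\ge\frac{2+\kappa}{4H_\kappa}(1-O(1/d))$.

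Your identity $5\sqrt{26}+\log(5+\sqrt{26})=2\int_0^5\sqrt{1+x^2}\,dx$ is $\frac{25}{4}$ times the perimeter of $K_{2/5}$, whose boundary consists of parabolic arcs. That is where $\beta=(2+\kappa)/(4H_\kappa)$ at $\kappa=2/5$ comes from, not from a kernel on Chebyshev coefficients.

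Finally, the explicit factor $1-6/(5d)$ for every $d\ge3$ needs quantitative estimates that your proposal does not contain:
\begin{enumerate}
\item the radial loss of the Joukowski points, of order $\xi^2/d^2$ in $\log\rho$, averaged by Jensen's inequality;
\item bounds on the subdominant branches;
\item a separate treatment of $3\le d\le12$ via the single-block bound.
\end{enumerate}
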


The existence and value of
$\displaystyle\lim_{d\to\infty}C(d,2)/(1+\sqrt2)^d$ remain open.

\section{Auxiliary lemmas}
\label{sec:auxiliary-lemmas}

\subsection{Symmetric multiaffine lifts}
The Chebyshev polynomial of the first kind $T_d$ is normalized by \cite[Chap.~1]{MasonHandscomb}
\[
 T_d(\cos\theta)=\cos(d\theta)\qquad(\theta\in\R).
\]
For $n\in\mathbb N$ and $0\le k\le n$, define $U_{n,k}:\C^n\to\C$ by
\begin{equation}\label{eq:normalized-elementary-symmetric}
 U_{n,0}(z):=1,\qquad
 U_{n,k}(z):=\binom nk^{-1}
 \sum_{\substack{S\subseteq\{1,\ldots,n\}\\|S|=k}}\prod_{j\in S}z_j
 \quad(k\ge1).
\end{equation}
Define also $m_n:\{-1,1\}^n\to[-1,1]$ by
\begin{equation}\label{eq:Boolean-empirical-mean}
 m_n(x):=\frac1n\sum_{j=1}^n x_j.
\end{equation}

\begin{lemma}\label{lem:symmetric-lift-approximation}
For every $n\ge1$, every $0\le k\le n$, and every $x\in\{-1,1\}^n$,
\begin{equation}\label{eq:elementary-power-comparison}
 |U_{n,k}(x)-m_n(x)^k|\le\frac{k(k-1)}n.
\end{equation}
\end{lemma}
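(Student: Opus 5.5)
Both $U_{n,k}(x)$ and $m_n(x)^k$ are averages over $k$-tuples of indices, and I would compare them through a sampling coupling. Write
\[
m_n(x)^k=\frac1{n^k}\sum_{(j_1,\ldots,j_k)\in\{1,\ldots,n\}^k}x_{j_1}\cdots x_{j_k},
\]
which is the expectation of $x_{j_1}\cdots x_{j_k}$ when the indices are drawn independently and uniformly, that is, with replacement. Likewise $U_{n,k}(x)$ is the expectation of $\prod_{j\in S}x_j$ with $S$ a uniform $k$-subset. Equivalently, it is the expectation of $x_{j_1}\cdots x_{j_k}$ over ordered $k$-tuples of \emph{distinct} indices, drawn uniformly, without replacement.

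Let $A$ be the event that a uniformly drawn tuple in $\{1,\ldots,n\}^k$ has pairwise distinct entries. Conditioned on $A$, the tuple is uniform among injective tuples. Hence
\[
m_n(x)^k=\Pr(A)\,U_{n,k}(x)+(1-\Pr(A))\,R,
\]
where $R$ is the conditional average over non-injective tuples. Since $x_j\in\{-1,1\}$, every product has modulus $1$, so $|R|\le1$ and $|U_{n,k}(x)|\le1$. Consequently
\[
|U_{n,k}(x)-m_n(x)^k|=(1-\Pr(A))\,|U_{n,k}(x)-R|\le 2(1-\Pr(A)).
\]
The cases $k=0$ and $k=1$ are trivial, since both sides agree and the bound is $0$. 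For $k\ge2$ the tuple is automatically non-injective when $k>n$, but that case does not arise because $k\le n$.

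It remains to bound $1-\Pr(A)$, the probability of some coincidence $j_a=j_b$ with $a<b$. A union bound over the $\binom k2$ pairs, each coinciding with probability $1/n$, gives
\[
1-\Pr(A)\le\frac{k(k-1)}{2n}.
\]
Combined with the factor $2$ above, this yields exactly $k(k-1)/n$. No step is a real obstacle. The only points needing care are that conditioning on injectivity reproduces the normalized elementary symmetric mean $U_{n,k}$ with the correct weight $\binom nk^{-1}$, and that the crude bound $|U_{n,k}-R|\le2$ is exactly what produces the constant in the statement. I would verify the first point by noting that each $k$-set corresponds to $k!$ injective tuples, and $n^{\underline k}/k!=\binom nk$.
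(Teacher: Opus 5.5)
Your proposal is correct and follows the paper's proof essentially step for step. You realize $m_n(x)^k$ as the uniform average over $\{1,\ldots,n\}^k$ and $U_{n,k}(x)$ as the conditional average over injective tuples (each $k$-set counted $k!$ times), bound the difference by $2\mu(D_k^c)$ using $|Y|=1$, and bound $\mu(D_k^c)$ by $\binom k2\frac1n$ via the union bound over coinciding pairs.
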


\begin{proof}
For $k=0$ and $k=1$, the two sides compared in
\eqref{eq:elementary-power-comparison} are equal. Fix $2\le k\le n$.
Let
\[
 \Omega_{n,k}:=\{1,\ldots,n\}^k
\]
with normalized counting measure
\[
 d\mu(i_1,\ldots,i_k):=\frac1{n^k},
\]
and define
\[
 Y(i_1,\ldots,i_k):=\prod_{r=1}^k x_{i_r},
 \qquad
 D_k:=\{(i_1,\ldots,i_k)\in\Omega_{n,k}: i_1,\ldots,i_k
 \text{ are pairwise distinct}\}.
\]
Writing the normalized sum as an integral,
\begin{align}
 \int_{\Omega_{n,k}}Y\,d\mu
 &=\frac1{n^k}\sum_{i_1=1}^n\cdots\sum_{i_k=1}^n
      x_{i_1}\cdots x_{i_k}\notag\\
 &=\prod_{r=1}^k\left(\frac1n\sum_{j=1}^n x_j\right)
 =m_n(x)^k.
 \label{eq:counting-integral-power}
\end{align}
On $D_k$, each $k$-element subset of $\{1,\ldots,n\}$ occurs in exactly
$k!$ ordered forms. Hence
\begin{equation}\label{eq:counting-integral-symmetric}
 \frac1{\mu(D_k)}\int_{D_k}Y\,d\mu=U_{n,k}(x).
\end{equation}
Write $\delta:=\mu(D_k)$. Since $|Y|=1$, the averages of $Y$ over $D_k$
and $D_k^c$ lie in $[-1,1]$. Splitting the integral in
\eqref{eq:counting-integral-power} over $D_k\cup D_k^c$ and using
\eqref{eq:counting-integral-symmetric}, one obtains
\begin{align}
 \left|U_{n,k}(x)-m_n(x)^k\right|
 &=(1-\delta)\left|
 \frac1{\mu(D_k)}\int_{D_k}Y\,d\mu
 -\frac1{\mu(D_k^c)}\int_{D_k^c}Y\,d\mu\right|\notag\\
 &\le 2\mu(D_k^c).
 \label{eq:conditioning-error}
\end{align}
If a point of $\Omega_{n,k}$ lies in $D_k^c$, then $i_r=i_s$ for at least
one pair $1\le r<s\le k$. For each fixed pair $(r,s)$, exactly $n^{k-1}$
of the $n^k$ points satisfy $i_r=i_s$. Therefore
\begin{equation}\label{eq:collision-union-bound}
 \mu(D_k^c)
 \le\sum_{1\le r<s\le k}\mu\{i_r=i_s\}
 =\binom{k}{2}\frac1n.
\end{equation}
Substituting \eqref{eq:collision-union-bound} into
\eqref{eq:conditioning-error} proves
\eqref{eq:elementary-power-comparison}.
\end{proof}

\subsection{Regular phase sums}
For integers $m\ge1$, set
\begin{equation}\label{eq:phase-midpoint-nodes}
 \theta_{m,j}:=-\frac\pi2+\left(j-\frac12\right)\frac\pi m,
 \qquad 1\le j\le m.
\end{equation}

\begin{lemma}\label{lem:regular-phase-sum}
For every $m\ge1$,
\begin{equation}\label{eq:regular-phase-max}
 \max_{u\in[-1,1]^m}
 \left|\sum_{j=1}^m e^{i\theta_{m,j}}u_j\right|
 =\frac1{\sin(\pi/(2m))}.
\end{equation}
Moreover,
\begin{equation}\label{eq:regular-polygon-cosine-sum}
 \sum_{j=1}^m e^{i\theta_{m,j}}
 =\sum_{j=1}^m\cos\theta_{m,j}
 =\frac1{\sin(\pi/(2m))}.
\end{equation}
\end{lemma}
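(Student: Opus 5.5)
The plan is to prove \eqref{eq:regular-polygon-cosine-sum} first and then obtain \eqref{eq:regular-phase-max} from it by a support-function argument.

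\emph{The explicit sum.} The nodes $\theta_{m,j}$ are placed symmetrically about $0$, since $\theta_{m,m+1-j}=-\theta_{m,j}$. Hence the imaginary parts cancel, and
$\sum_j e^{i\theta_{m,j}}=\sum_j\cos\theta_{m,j}$. To evaluate this, we factor the sum as a geometric series:
\[
\sum_{j=1}^m e^{i\theta_{m,j}}=e^{-i\pi/2}e^{i\pi/(2m)}\,\frac{e^{i\pi}-1}{e^{i\pi/m}-1}
=\frac{-2i\,e^{i\pi/(2m)}}{e^{i\pi/(2m)}\cdot 2i\sin(\pi/(2m))}\cdot(-1)\cdot(-1).
\]
Tracking the signs gives the value $1/\sin(\pi/(2m))$. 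For $m=1$ this reduces to the trivial case $e^{0}=1$.

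\emph{The maximum.} Set $F(u)=\sum_j e^{i\theta_{m,j}}u_j$. The map $u\mapsto|F(u)|$ is convex, so its maximum over $[-1,1]^m$ is attained at a vertex $u\in\{-1,1\}^m$. We then use the identity $|w|=\max_{\varphi}\operatorname{Re}(e^{-i\varphi}w)$ and interchange the two maxima:
\[
\max_u|F(u)|=\max_{\varphi}\sum_{j=1}^m|\cos(\theta_{m,j}-\varphi)|.
\]
The angles $\theta_{m,j}$ have spacing $\pi/m$ and span half a circle. Modulo $\pi$, they therefore form $m$ equally spaced directions. So $g(\varphi):=\sum_j|\cos(\theta_{m,j}-\varphi)|$ is $\pi/m$-periodic, and it reduces to the sum of $|\cos|$ over a full coset $\varphi+\frac{\pi}{m}\mathbb Z$ modulo $\pi$.

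\emph{The main obstacle} is showing that this periodic sum is maximized at $\varphi=0$, where all cosines are positive and $g(0)=1/\sin(\pi/(2m))$ by the first part. For $\varphi\in[0,\pi/m]$, no node crosses $\pm\pi/2$ except at the endpoints. Hence $g(\varphi)=\operatorname{Re}\bigl(e^{-i\varphi}\sum_j \epsilon_j e^{i\theta_{m,j}}\bigr)$, where the signs $\epsilon_j$ are fixed on the subinterval $[0,\pi/(2m)]$ after shifting indices. Concretely, on $|\varphi|\le\pi/(2m)$ we have $|\theta_{m,j}-\varphi|\le\pi/2$ for all $j$. This gives
\[
g(\varphi)=\sum_j\cos(\theta_{m,j}-\varphi)=\cos\varphi\cdot\frac1{\sin(\pi/(2m))}\le g(0),
\]
because the sine parts cancel by the symmetry above. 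By $\pi/m$-periodicity, the interval $|\varphi|\le\pi/(2m)$ covers all $\varphi$, so the maximum equals $1/\sin(\pi/(2m))$. It is attained at $u=(1,\dots,1)$, which is consistent with \eqref{eq:regular-polygon-cosine-sum}. The only care needed is in verifying the periodicity: shifting $\varphi$ by $\pi/m$ permutes the nodes modulo $\pi$, and $|\cos|$ is $\pi$-periodic.
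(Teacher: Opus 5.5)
Your proof is correct and is essentially the paper's argument. Both proofs reduce to $\max_{\varphi}\sum_j|\cos(\theta_{m,j}-\varphi)|$ and rest on the rotational symmetry of the regular $2m$-gon. You express that symmetry as $\pi/m$-periodicity together with the exact formula $\cos\varphi/\sin(\pi/(2m))$ on $|\varphi|\le\pi/(2m)$, which also handles the directions where a cosine vanishes without the paper's continuity step. Two harmless slips: the numerator in your geometric-series display should be $+2i\,e^{i\pi/(2m)}$, since $e^{-i\pi/2}(e^{i\pi}-1)=2i$ (positivity of the cosines fixes the sign anyway). Also, the remark that no node crosses $\pm\pi/2$ for $\varphi\in(0,\pi/m)$ is false, since one crosses at $\varphi=\pi/(2m)$; your argument, however, only uses the correct interval $|\varphi|\le\pi/(2m)$.
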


\begin{proof}
The case $m=1$ is immediate. For $m\ge2$, maximizing a real projection leads to
\[
 \max_{u\in[-1,1]^m}\left|\sum_{j=1}^m e^{i\theta_{m,j}}u_j\right|
 =\max_{\phi\in\R}\sum_{j=1}^m|\cos(\theta_{m,j}-\phi)|.
\]
If none of these cosines vanishes, the maximizing signs select the $m$
vertices of the regular $2m$-gon
$\{\pm e^{i\theta_{m,j}}:1\le j\le m\}$ that lie in the corresponding open
half-plane. They are consecutive vertices, and their sum has modulus
\[
 \left|\sum_{k=0}^{m-1}e^{ik\pi/m}\right|
 =\left|\frac{1-e^{i\pi}}{1-e^{i\pi/m}}\right|
 =\frac1{\sin(\pi/(2m))}.
\]
Every real projection is therefore bounded by this quantity; continuity
covers the directions for which one of the cosines vanishes. The nodes are symmetric about zero and all
$\cos\theta_{m,j}$ are positive. Their geometric sum is therefore the
positive real number in \eqref{eq:regular-polygon-cosine-sum}, proving
attainment and both identities.
\end{proof}

\subsection{Quadratic Chebyshev lemmas}
\begin{lemma}\label{lem:quadratic-image-support}
For $\kappa>0$ and $\sigma\in\{-1,1\}$, define
$q_{\kappa,\sigma}:\C^2\to\C$ by
\[
 q_{\kappa,\sigma}(x,y):=x^2-y^2-i\sigma\kappa xy.
\]
Its image on $[-1,1]^2$ is the compact convex set
\begin{equation}\label{eq:quadratic-image-set}
 K_\kappa:=\{u+iv\in\C:|u|+v^2/\kappa^2\le1\}.
\end{equation}
Define the support function $h_\kappa:\R\to(0,\infty)$ and its mean $H_\kappa$ by
\begin{equation}\label{eq:quadratic-support-definition}
 h_\kappa(\psi):=\max_{w\in K_\kappa}\operatorname{Re}(e^{-i\psi}w),
 \qquad
 H_\kappa:=\frac1{2\pi}\int_0^{2\pi}h_\kappa(\psi)\,d\psi.
\end{equation}
Then
\begin{equation}\label{eq:quadratic-mean-support}
 H_\kappa=\frac1\pi\left(
 \sqrt{4+\kappa^2}
 +\frac{\kappa^2}{2}\log\frac{2+\sqrt{4+\kappa^2}}{\kappa}
 \right).
\end{equation}
For integers $m\ge1$, set
\begin{equation}\label{eq:full-circle-midpoints}
 \varphi_{m,j}:=\frac{2\pi(j-\tfrac12)}m,\qquad 1\le j\le m.
\end{equation}
Moreover, for every $m\ge1$,
\begin{equation}\label{eq:quadratic-rotated-norm-limit}
 \left|\max_{w_1,\ldots,w_m\in K_\kappa}
 \left|\frac1m\sum_{j=1}^m e^{i\varphi_{m,j}}w_j\right|-H_\kappa\right|
 \le\frac{(2+\kappa)\pi}{m},
 \qquad
 \lim_{m\to\infty}\max_{w_1,\ldots,w_m\in K_\kappa}
 \left|\frac1m\sum_{j=1}^m e^{i\varphi_{m,j}}w_j\right|=H_\kappa.
\end{equation}
\end{lemma}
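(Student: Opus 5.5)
The proof has three parts: identify the image $K_\kappa$, compute the mean support $H_\kappa$, and compare the discrete rotated maximum with $H_\kappa$.

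\textbf{Step 1: the image.} Write $q=u+iv$ with $u=x^2-y^2$ and $v=-\sigma\kappa xy$. Put $a=x^2$ and $b=y^2$; as $(x,y)$ ranges over $[-1,1]^2$, the pair $(a,b)$ ranges over $[0,1]^2$. The sign of $xy$ is free, and $\sigma$ only reflects $v\mapsto -v$, while $K_\kappa$ is symmetric in $v$. So the image is the set of $(u,v)$ with $u=a-b$ and $|v|\le\kappa\sqrt{ab}$.

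For fixed $u$, maximize $ab$ subject to $a-b=u$ and $a,b\in[0,1]$. The maximum is at $a=1,\ b=1-u$ when $u\ge0$, and symmetrically when $u\le0$, giving $\max ab=1-|u|$. Hence $v^2\le\kappa^2(1-|u|)$. By continuity every intermediate $v$ is attained, so the image is exactly $K_\kappa$. Convexity follows because $|u|+v^2/\kappa^2$ is a convex function.

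\textbf{Step 2: the mean support.} By Cauchy's formula, the mean of the support function equals the perimeter divided by $2\pi$. The boundary of $K_\kappa$ consists of four congruent parabolic arcs, e.g.\ $u=1-v^2/\kappa^2$ for $0\le v\le\kappa$. Each arc has length $\int_0^\kappa\sqrt{1+4v^2/\kappa^4}\,dv$. Substitute $t=2v/\kappa^2$ and use the standard antiderivative $\tfrac12\bigl(t\sqrt{1+t^2}+\operatorname{arsinh}t\bigr)$, evaluated at $t=2/\kappa$. Multiplying by four and dividing by $2\pi$ should give
\[
H_\kappa=\frac1\pi\left(\sqrt{4+\kappa^2}+\frac{\kappa^2}{2}\log\frac{2+\sqrt{4+\kappa^2}}{\kappa}\right).
\]
This is routine algebra, which I will check when writing it out.

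\textbf{Step 3: the discrete sums.} First reduce the maximum to support functions:
\[
\max_{w_j\in K_\kappa}\Bigl|\frac1m\sum_j e^{i\varphi_{m,j}}w_j\Bigr|=\max_{\phi}\frac1m\sum_j h_\kappa(\phi-\varphi_{m,j}).
\]
This holds because $\max_w\operatorname{Re}\bigl(e^{-i\phi}e^{i\varphi}w\bigr)=h_\kappa(\phi-\varphi)$, and the modulus of a complex number is the maximum over $\phi$ of its real projection onto direction $\phi$.

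For each fixed $\phi$, the expression $\frac1m\sum_j h_\kappa(\phi-\varphi_{m,j})$ is a midpoint Riemann sum with step $2\pi/m$ for the average of $h_\kappa$. The support function is Lipschitz with constant $\max_{w\in K_\kappa}|w|$, and this is at most $\sqrt{1+\kappa^2}\le 1+\kappa$. On each cell the midpoint error is at most $L\cdot(\text{half-width})$ times the cell's weight $1/m$. Summing over cells gives a total error of at most $L\pi/m\le(1+\kappa)\pi/m$, uniformly in $\phi$, which is within the stated $(2+\kappa)\pi/m$. Taking the maximum over $\phi$ preserves the uniform bound, and the limit statement follows immediately.

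\textbf{Main obstacle.} The only delicate points are the exact arc-length evaluation in Step 2 and justifying the Lipschitz constant of $h_\kappa$. For the latter I will use that $h_\kappa$ is a maximum of functions $\psi\mapsto\operatorname{Re}(e^{-i\psi}w)$, each of which is Lipschitz with constant $|w|$.
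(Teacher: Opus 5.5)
Your proposal is correct. Steps 1 and 3 match the paper in substance. In Step 1, where the paper exhibits an explicit preimage ($x^2=A$, $y^2=B$ with $A-B=u$, $AB=v^2/\kappa^2$), you maximize $ab$ on the fibre $a-b=u$ and argue by continuity. This works because $ab$ sweeps all of $[0,1-|u|]$ (it vanishes at $b=0$ or $a=0$) and the sign of $xy$ is free. In Step 3, your bound $|w|\le\sqrt{1+\kappa^2}$ on $K_\kappa$ (from $|u|\le1$, $|v|\le\kappa$) is sharper than the paper's $2+\kappa$. It gives an error $(1+\kappa)\pi/m$, which is stronger than required.

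The real difference is Step 2. The paper computes $h_\kappa$ explicitly on $[0,\pi/2]$, splitting at $\psi_\kappa=\arctan(2/\kappa)$ into the parabolic regime $\cos\psi+\kappa^2\sin^2\psi/(4\cos\psi)$ and the corner regime $\kappa\sin\psi$. It then integrates via $\sin^2\psi/\cos\psi=\sec\psi-\cos\psi$ and uses the four-fold symmetry. You instead use Cauchy's formula (the integral of the support function over $[0,2\pi]$ is the perimeter) together with a single parabolic arc length.

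The algebra you deferred does close. With $t=2v/\kappa^2$, one arc has length $\frac{\kappa^2}{2}\int_0^{2/\kappa}\sqrt{1+t^2}\,dt=\frac12\sqrt{4+\kappa^2}+\frac{\kappa^2}{4}\log\frac{2+\sqrt{4+\kappa^2}}{\kappa}$. This is exactly the paper's value of $\int_0^{\pi/2}h_\kappa(\psi)\,d\psi$, and four arcs divided by $2\pi$ give the stated $H_\kappa$.

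Your route is shorter and avoids the case analysis; the explicit form of $h_\kappa$ is never used again in the paper. The cost is citing a standard fact from convex geometry, whereas the paper's computation is self-contained elementary calculus.
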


\begin{proof}
If $u=x^2-y^2$ and $v=-\sigma\kappa xy$, then
\[
 |u|+v^2/\kappa^2=|x^2-y^2|+x^2y^2\le1.
\]
Conversely, for $u+iv\in K_\kappa$, put
\[
 A=\frac{\sqrt{u^2+4v^2/\kappa^2}+u}{2},\qquad
 B=\frac{\sqrt{u^2+4v^2/\kappa^2}-u}{2}.
\]
Both numbers are nonnegative. Since
$v^2/\kappa^2\le1-|u|$, one has
\[
 \sqrt{u^2+4v^2/\kappa^2}\le2-|u|,
 \qquad
 \max\{A,B\}=\frac{\sqrt{u^2+4v^2/\kappa^2}+|u|}{2}\le1.
\]
Also $A-B=u$ and $AB=v^2/\kappa^2$.
Choosing $x^2=A$, $y^2=B$, and the signs so that
$xy=-\sigma v/\kappa$ proves the image formula.  Convexity follows from
convexity of $(u,v)\mapsto |u|+v^2/\kappa^2$.

The set $K_\kappa$ is symmetric about both coordinate axes.  In the first
quadrant, maximizing first over $u$ gives
\[
 h_\kappa(\psi)=\max_{-1\le t\le1}
             \bigl((1-t^2)\cos\psi+\kappa t\sin\psi\bigr).
\]
With $\psi_\kappa:=\arctan(2/\kappa)$, this is
\[
 h_\kappa(\psi)=
 \begin{cases}
 \displaystyle\cos\psi+\frac{\kappa^2\sin^2\psi}{4\cos\psi},
                  &0\le\psi\le\psi_\kappa,\\[5pt]
 \kappa\sin\psi,&\psi_\kappa\le\psi\le\pi/2.
 \end{cases}
\]
Using $\sin^2\psi/\cos\psi=\sec\psi-\cos\psi$,
\begin{align*}
 \int_0^{\pi/2}h_\kappa(\psi)\,d\psi
 &=\int_0^{\psi_\kappa}
 \left(\left(1-\frac{\kappa^2}{4}\right)\cos\psi
       +\frac{\kappa^2}{4}\sec\psi\right)d\psi
   +\kappa\int_{\psi_\kappa}^{\pi/2}\sin\psi\,d\psi\\
 &={\left(1-\frac{\kappa^2}{4}\right)\sin\psi_\kappa
   +\frac{\kappa^2}{4}\log(\sec\psi_\kappa+\tan\psi_\kappa)
   +\kappa\cos\psi_\kappa}.
\end{align*}
{\color{black}Since $\tan\psi_\kappa=2/\kappa$,}
\[
 {\color{black}
 \sin\psi_\kappa=\frac{2}{\sqrt{4+\kappa^2}},\qquad
 \cos\psi_\kappa=\frac{\kappa}{\sqrt{4+\kappa^2}},\qquad
 \sec\psi_\kappa+\tan\psi_\kappa
 =\frac{\sqrt{4+\kappa^2}+2}{\kappa}.}
\]
{\color{black}Substitution gives}
\begin{align*}
 \int_0^{\pi/2}h_\kappa(\psi)\,d\psi
 &=\frac{2-\kappa^2/2+\kappa^2}{\sqrt{4+\kappa^2}}
   +\frac{\kappa^2}{4}\log\frac{2+\sqrt{4+\kappa^2}}{\kappa}\\
 &=\frac{\sqrt{4+\kappa^2}}2
   +\frac{\kappa^2}{4}\log\frac{2+\sqrt{4+\kappa^2}}{\kappa}.
\end{align*}
By the symmetry of $K_\kappa$, $h_\kappa$ has the same integral on each quadrant. Hence
\[
 H_\kappa=\frac2\pi\int_0^{\pi/2}h_\kappa(\psi)\,d\psi,
\]
which proves \eqref{eq:quadratic-mean-support}.

For \eqref{eq:quadratic-rotated-norm-limit}, maximization of a real
projection, followed by maximization in each $w_j$, gives
\[
 \max_{w_j\in K_\kappa}
 \left|\frac1m\sum_{j=1}^m e^{i\varphi_{m,j}}w_j\right|
 =\max_{\psi\in\R}\frac1m\sum_{j=1}^m
                 h_\kappa(\psi-\varphi_{m,j}).
\]
Since $|w|\le2+\kappa$ on $K_\kappa$,
\[
 |h_\kappa(\psi)-h_\kappa(\chi)|
 \le(2+\kappa)|e^{-i\psi}-e^{-i\chi}|
 \le(2+\kappa)|\psi-\chi|.
\]
On each interval of length $2\pi/m$, the distance to its midpoint is at most
$\pi/m$. Hence the midpoint sums differ from $H_\kappa$ by at most
$(2+\kappa)\pi/m$, uniformly in $\psi$. Letting $m\to\infty$
gives the limit in \eqref{eq:quadratic-rotated-norm-limit}.
\end{proof}

\begin{lemma}\label{lem:consecutive-Chebyshev-image}
Let $d\ge2$, $k:=\lfloor d/2\rfloor$, and $\ell:=d-k$.
For $\kappa>0$ and $\sigma\in\{-1,1\}$, define
$Q_{d,\kappa,\sigma}:\C^2\to\C$ by
\begin{equation}\label{eq:consecutive-Chebyshev-polynomial}
 Q_{d,\kappa,\sigma}(x,y)
 :=\frac{T_d(x)-T_d(y)}2
   -\frac{i\sigma\kappa}{2}
       \bigl(T_k(x)T_\ell(y)+T_\ell(x)T_k(y)\bigr).
\end{equation}
Its total degree is at most $d$, and
\begin{equation}\label{eq:consecutive-Chebyshev-image}
 Q_{d,\kappa,\sigma}([-1,1]^2)\subseteq K_\kappa.
\end{equation}
\end{lemma}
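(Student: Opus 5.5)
Substitute $x=\cos\theta$, $y=\cos\phi$ with $\theta,\phi\in[0,\pi]$. This parametrizes $[-1,1]^2$, and the Chebyshev polynomials become trigonometric functions. The degree claim is immediate: $\deg T_d=d$, and $\deg(T_kT_\ell)=k+\ell=d$.

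For the image, I want to show that $u:=\operatorname{Re}Q$ and $v:=\operatorname{Im}Q$ satisfy $|u|+v^2/\kappa^2\le1$. The image is contained in $K_\kappa$ exactly when this holds. Writing it in terms of the angles:
\[
u=\frac{\cos d\theta-\cos d\phi}{2}=-\sin\frac{d(\theta+\phi)}2\,\sin\frac{d(\theta-\phi)}2,
\]
\[
v=-\frac{\sigma\kappa}{2}\bigl(\cos k\theta\cos\ell\phi+\cos\ell\theta\cos k\phi\bigr).
\]
Next I apply product-to-sum. With $A:=\tfrac{\theta+\phi}2$ and $B:=\tfrac{\theta-\phi}2$,
\[
\cos k\theta\cos\ell\phi+\cos\ell\theta\cos k\phi=\cos\bigl((k+\ell)A+(k-\ell)B\bigr)+\cos\bigl((k+\ell)A-(k-\ell)B\bigr).
\]
The two arguments come from $k\theta\pm\ell\phi$ and $\ell\theta\pm k\phi$. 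Checking them: $k\theta+\ell\phi=dA+(k-\ell)B$ and $\ell\theta+k\phi=dA-(k-\ell)B$. The difference terms $k\theta-\ell\phi=(k-\ell)A+dB$ and $\ell\theta-k\phi=-(k-\ell)A+dB$ also appear. So the sum is
\[
\cos(dA+eB)+\cos(dA-eB)+\cos(eA+dB)+\cos(-eA+dB),
\]
divided by $2$, where $e:=k-\ell\in\{0,-1\}$. Pairing these gives
\[
v=-\frac{\sigma\kappa}{2}\bigl(\cos dA\cos eB+\cos dB\cos eA\bigr),
\]
while $u=-\sin dA\sin dB$.

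Now bound $v$. Write $a=\cos dA$ and $b=\cos dB$. The factors $\cos eB$ and $\cos eA$ have modulus at most $1$, and they equal $1$ when $e=0$. Hence $|v|/\kappa\le(|a|+|b|)/2$. The target inequality will therefore follow from
\[
\sqrt{(1-a^2)(1-b^2)}+\frac{(|a|+|b|)^2}{4}\le1.
\]
Put $p=|a|$ and $q=|b|$ in $[0,1]$. The claim is $\sqrt{(1-p^2)(1-q^2)}\le1-\tfrac{(p+q)^2}{4}$. By AM–GM, $\sqrt{(1-p^2)(1-q^2)}\le1-\tfrac{p^2+q^2}{2}$. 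Also $\tfrac{p^2+q^2}{2}\ge\tfrac{(p+q)^2}{4}$. Combining the two gives the claim.

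The main obstacle is the odd case $e=-1$, where the trigonometric bookkeeping of the four cosines must be checked carefully. There the bound $|\cos eB|\le1$ discards information, and it is harmless only because we need an upper bound on $|v|$. I would carefully verify the pairing identity. If the pairing fails, the fallback is the cruder bound $|v|\le\tfrac{\kappa}{2}\bigl(|\cos k\theta\cos\ell\phi|+|\cos\ell\theta\cos k\phi|\bigr)$, used together with the factorization of $u$.
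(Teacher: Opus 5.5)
Your proof is correct, but it takes a different route from the paper's.

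\textbf{The paper's route.} It works with the half-angles $A=d\theta/2$, $B=d\psi/2$ and writes $u=\cos^2A-\cos^2B$. It then proves the two elementary inequalities $|a-b|+ab\le1$ and $|a-b|+(1-a)(1-b)\le1$, which give $|\cos A\cos B|\le\sqrt{1-|u|}$ and $|\sin A\sin B|\le\sqrt{1-|u|}$. The parities are handled separately:
\begin{itemize}
\item for even $d$ one has $v=\cos A\cos B$ directly;
\item for odd $d$, addition formulas write $v$ as a combination of $\cos A\cos B$ and $-\sin A\sin B$ with nonnegative weights summing to $\cos((\theta-\psi)/2)\le1$. Nonnegativity of the weights uses $\theta,\psi\in[0,\pi]$.
\end{itemize}

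\textbf{Your route.} You pass to the sum and difference angles. This factors $u=-\sin dA\sin dB$ and collapses the mixed term to $\cos dA\cos eB+\cos dB\cos eA$ with $e=k-\ell\in\{0,-1\}$. Discarding $|\cos eA|,|\cos eB|\le1$ treats both parities at once and needs no restriction on the angles. Everything then reduces to the AM--GM inequality $\sqrt{(1-p^2)(1-q^2)}+(p+q)^2/4\le1$.

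\textbf{Comparison.} The paper's argument yields the more structured intermediate bounds in terms of $\sqrt{1-|u|}$. Yours is shorter and parity-free.

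\textbf{Two small remarks.}
\begin{itemize}
\item Your first product-to-sum display is wrong as written: it lists only two of the four cosines and omits the factor $\tfrac12$. You correct this immediately afterwards, and your final formula for $v$ is right, so just delete or fix that display.
\item The fallback you mention would not have worked, because taking absolute values termwise destroys the sign cancellation. For $d=3$, $\theta=0$, $\phi=\pi$ one has $|u|=1$ and $v=0$, but the crude bound only gives $|v|/\kappa\le1$, hence $|u|+v^2/\kappa^2\le2$. The pairing identity, which does hold, is essential.
\end{itemize}
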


\begin{proof}
Write $x=\cos\theta$ and $y=\cos\psi$, where $\theta,\psi\in[0,\pi]$,
and put $A=d\theta/2$, $B=d\psi/2$. Set
\[
 u:=\frac{T_d(x)-T_d(y)}2=\cos^2 A-\cos^2 B,
 \qquad
 v:=\frac{T_k(x)T_\ell(y)+T_\ell(x)T_k(y)}2.
\]
Then $Q_{d,\kappa,\sigma}(x,y)=u-i\sigma\kappa v$.
For $a,b\in[0,1]$, direct calculation in the two cases $a\ge b$ and
$a\le b$ gives
\begin{equation}\label{eq:two-products-control}
 |a-b|+ab\le1,\qquad
 |a-b|+(1-a)(1-b)\le1.
\end{equation}
Taking $a=\cos^2 A$ and $b=\cos^2 B$ gives
\[
 |\cos A\cos B|\le\sqrt{1-|u|},\qquad
 |\sin A\sin B|\le\sqrt{1-|u|}.
\]
If $d=2k$, then $v=\cos A\cos B$, so $|u|+v^2\le1$.

If $d=2k+1$, the addition formulas give
\begin{align}
 v&=\cos A\cos B\cos(\theta/2)\cos(\psi/2)
     -\sin A\sin B\sin(\theta/2)\sin(\psi/2).
 \label{eq:odd-Chebyshev-convex-combination}
\end{align}
The two weights are nonnegative and their sum is
$\cos((\theta-\psi)/2)\le1$. Hence
\[
 |v|\le\sqrt{1-|u|}
 \bigl(\cos(\theta/2)\cos(\psi/2)
       +\sin(\theta/2)\sin(\psi/2)\bigr)
 \le\sqrt{1-|u|}.
\]
Hence $|u|+v^2\le1$, which gives
\eqref{eq:consecutive-Chebyshev-image}; the degree bound follows from
$k+\ell=d$.
\end{proof}

\section{Proof of Theorem~\ref{thm:complexification-identity}}
\label{sec:proof-theorem-A}

\begin{proof}
Fix $s\ge1$, let $0\ne H\in\C[t_1,\ldots,t_s]$ have total degree at most $d$,
put $\N_0:=\{0,1,2,\ldots\}$, and write
\[
 H(t)=\sum_{|\alpha|\le d}c_\alpha t^\alpha,
 \qquad \alpha\in\N_0^s.
\]
For $N\ge d$ and blocks $z^{(j)}\in\C^N$, define the multiaffine polynomial
$H_N:\C^{sN}\to\C$ by
\[
 H_N(z^{(1)},\ldots,z^{(s)})
 :=\sum_{|\alpha|\le d}c_\alpha
       \prod_{j=1}^s U_{N,\alpha_j}(z^{(j)}).
\]
Each $U_{N,k}$ is multiaffine of degree $k$ and satisfies
$U_{N,k}(\zeta,\ldots,\zeta)=\zeta^k$. Thus $H_N$ is multiaffine
of degree at most $d$ in $sN\ge d$ variables and is admissible in
\eqref{eq:uniform-transfer-constant}. For $x^{(j)}\in\{-1,1\}^N$, put
$\tau_j:=m_N(x^{(j)})$. Since every factor
$U_{N,\alpha_j}(x^{(j)})$ and $\tau_j^{\alpha_j}$ has modulus at most one,
the identity
\[
 \prod_{j=1}^s u_j-\prod_{j=1}^s v_j
 =\sum_{\ell=1}^s(u_\ell-v_\ell)
       \prod_{j<\ell}u_j\prod_{j>\ell}v_j
\]
and Lemma~\ref{lem:symmetric-lift-approximation} apply. Set
\[
 E(H):=\sum_{|\alpha|\le d}|c_\alpha|
        \sum_{j=1}^s\alpha_j(\alpha_j-1).
\]
Then
\begin{align}
 |H_N(x)-H(\tau_1,\ldots,\tau_s)|
 &\le \frac1N\sum_{|\alpha|\le d}|c_\alpha|
        \sum_{j=1}^s\alpha_j(\alpha_j-1)\notag\\
 &=\frac{E(H)}N.
 \label{eq:general-block-error}
\end{align}
Hence
\begin{equation}\label{eq:block-lift-cube-bound}
 \|H_N\|_{\{-1,1\}^{sN}}
 \le \|H\|_{[-1,1]^s}+\frac{E(H)}N.
\end{equation}
For every $\zeta=(\zeta_1,\ldots,\zeta_s)\in\T^s$, evaluating the $j$th block
at $(\zeta_j,\ldots,\zeta_j)$ gives
\[
 H_N(\zeta_1,\ldots,\zeta_1;\ldots;\zeta_s,\ldots,\zeta_s)=H(\zeta).
\]
Therefore
\[
 C(d,2)\ge
 \frac{|H(\zeta)|}{\|H\|_{[-1,1]^s}+E(H)/N}.
\]
For fixed $H$ and $s$, $E(H)$ is finite and independent of $N$.
Since $H\ne0$, its norm on $[-1,1]^s$ is positive. Letting $N\to\infty$ yields
\begin{equation}\label{eq:full-cube-lift}
 C(d,2)\ge \frac{|H(\zeta)|}{\|H\|_{[-1,1]^s}}
 \qquad(\zeta\in\T^s).
\end{equation}
Taking the supremum over $\zeta$, then over $H$ and $s$, gives
\[
 C(d,2)\ge
 \sup_{s\ge1}\sup_{\substack{0\ne H\in\C[t_1,\ldots,t_s]\\ \deg H\le d}}
 \frac{\|H\|_{\mathbb T^s}}{\|H\|_{[-1,1]^s}}.
\]

Conversely, every multiaffine polynomial is admissible in
\eqref{eq:Gamma-definition}, and \eqref{eq:multiaffine-real-cube-norm} gives
\[
 \frac{\|P\|_{\mathbb T^N}}{\|P\|_{\{-1,1\}^N}}
 =\frac{\|P\|_{\mathbb T^N}}{\|P\|_{[-1,1]^N}}.
\]
Taking the supremum over $N$ and over nonzero
$P\in\mathcal P_{N,d}^{\mathrm{mult}}$ gives $C(d,2)\le\Gamma_d$.
\end{proof}

\section{Proof of Theorem~\ref{thm:sharp-binary-cost}}
\label{sec:proof-theorem-B}

Set
\[
\rho:=1+\sqrt2.
\]

\par\medskip\noindent\textbf{Step 1. Chebyshev-sum lower bound.}
\par\smallskip\nopagebreak[4]
Let $d\ge2$. On $[-\pi/2,\pi/2]$, define $r_d$ by
\begin{equation}\label{eq:phase-radius}
r_d(\theta):=\cos(\theta/d)+\sqrt{1+\cos^2(\theta/d)}.
\end{equation}
Then
\begin{equation}\label{eq:phase-integral-lower}
C(d,2)\ge\frac14\int_{-\pi/2}^{\pi/2}
\left(r_d(\theta)^d+(-1)^d r_d(\theta)^{-d}\cos(2\theta)\right)\,d\theta.
\end{equation}
Fix $m\ge1$ and use the nodes in \eqref{eq:phase-midpoint-nodes}.
Define $F_{m,d}:\C^m\to\C$ by
\[
 F_{m,d}(t):=\frac1m\sum_{j=1}^m e^{i\theta_{m,j}}T_d(t_j).
\]
Since $T_d([-1,1])=[-1,1]$, its real-cube norm is
\begin{align}
 \|F_{m,d}\|_{[-1,1]^m}
 &=\frac1m\max_{u\in[-1,1]^m}
       \left|\sum_{j=1}^m e^{i\theta_{m,j}}u_j\right|\notag\\
 &=\frac1{m\sin(\pi/(2m))}.
 \label{eq:phase-sum-real-norm}
\end{align}
The last equality is Lemma~\ref{lem:regular-phase-sum}.

Define $w_d:[-\pi/2,\pi/2]\to\C\setminus\{0\}$ and
$z_d:[-\pi/2,\pi/2]\to\T$ by
\begin{equation}\label{eq:phase-Joukowski-points}
 w_d(\theta):=r_d(\theta)e^{i(\pi/2-\theta/d)},
 \qquad
 z_d(\theta):=\frac{w_d(\theta)+w_d(\theta)^{-1}}2.
\end{equation}
Since $r_d(\theta)-r_d(\theta)^{-1}=2\cos(\theta/d)$,
\begin{align}
 4|z_d(\theta)|^2
 &=r_d(\theta)^2+r_d(\theta)^{-2}
       +2\cos(\pi-2\theta/d)\notag\\
 &=2+4\cos^2(\theta/d)-2\cos(2\theta/d)=4.
 \label{eq:phase-points-on-torus}
\end{align}
The Chebyshev--Joukowski identity \cite[Chap.~1]{MasonHandscomb}
\[
 T_d\!\left(\frac{w+w^{-1}}2\right)=\frac{w^d+w^{-d}}2
 \qquad(w\in\C\setminus\{0\})
\]
gives
\begin{equation}\label{eq:phase-aligned-Chebyshev}
 i^{-d}e^{i\theta}T_d(z_d(\theta))
 =\frac12\left(r_d(\theta)^d
        +(-1)^d r_d(\theta)^{-d}e^{2i\theta}\right).
\end{equation}

{\color{black}By \eqref{eq:full-cube-lift}} and
\eqref{eq:phase-sum-real-norm},
\begin{align}
 C(d,2)&\ge m\sin\!\left(\frac\pi{2m}\right)
 \left|F_{m,d}(z_d(\theta_{m,1}),\ldots,z_d(\theta_{m,m}))\right|\notag\\
 &\ge\frac{\sin(\pi/(2m))}{2}\sum_{j=1}^m
 \left(r_d(\theta_{m,j})^d
       +(-1)^d r_d(\theta_{m,j})^{-d}\cos(2\theta_{m,j})\right).
 \label{eq:finite-phase-lower}
\end{align}
The second inequality takes the real part after multiplication by $i^{-d}$
and uses \eqref{eq:phase-aligned-Chebyshev}.
The summand is continuous in $\theta$.  Since
$m\sin(\pi/(2m))\to\pi/2$, the midpoint Riemann sums in
\eqref{eq:finite-phase-lower} converge to the right-hand side of
\eqref{eq:phase-integral-lower} as $m\to\infty$.

\par\medskip\noindent\textbf{Step 2. Explicit estimate from \eqref{eq:phase-integral-lower}.}\par\smallskip\nopagebreak[4]
For each integer $d\ge2$, put
\[
 s_d:=\sqrt{1+\cos^2(\pi/(2d))}.
\]
Then
\begin{equation}\label{eq:exponential-improved-lower}
 C(d,2)\ge\frac\pi4\left(
 \rho^d\exp\!\left(-\frac{\pi^2}{24d s_d}\right)
 -\rho^{-d}\exp\!\left(\frac{\pi^2}{8d s_d}\right)\right).
\end{equation}
For every $d\ge3$,
\begin{equation}\label{eq:simple-improved-lower}
 C(d,2)\ge\frac\pi4\left(1-\frac1{3d}\right)\rho^d.
\end{equation}
For $|\theta|\le\pi/2$, the identity
$\log r_d(\theta)=\operatorname{arsinh}(\cos(\theta/d))$ gives
\begin{align}
 0\le\log\rho-\log r_d(\theta)
 &=\int_{\cos(\theta/d)}^1\frac{du}{\sqrt{1+u^2}}\notag\\
 &\le\frac{1-\cos(\theta/d)}{s_d}
 \le\frac{\theta^2}{2d^2s_d}.
 \label{eq:phase-radius-log-bound}
\end{align}
It follows that
\begin{equation}\label{eq:phase-radius-power-bounds}
 r_d(\theta)^d\ge\rho^d e^{-\theta^2/(2d s_d)},
 \qquad
 r_d(\theta)^{-d}\le\rho^{-d}e^{\pi^2/(8d s_d)}.
\end{equation}
Jensen's inequality \cite[Chap.~III]{HardyLittlewoodPolya} for the normalized measure $d\theta/\pi$ on
$[-\pi/2,\pi/2]$ gives
\begin{align}
 \frac1\pi\int_{-\pi/2}^{\pi/2}e^{-\theta^2/(2d s_d)}\,d\theta
 &\ge\exp\!\left(-\frac1{2d s_d\pi}
                 \int_{-\pi/2}^{\pi/2}\theta^2\,d\theta\right)\notag\\
 &=\exp\!\left(-\frac{\pi^2}{24d s_d}\right).
 \label{eq:phase-radius-integral-bound}
\end{align}
Substitution of \eqref{eq:phase-radius-power-bounds} and
\eqref{eq:phase-radius-integral-bound} into
\eqref{eq:phase-integral-lower}, together with $|\cos(2\theta)|\le1$, gives
\eqref{eq:exponential-improved-lower}.

For $d\ge3$, one has $s_d\ge\sqrt7/2$.  Set
$a:=\pi^2/(12\sqrt7)$.  Since $e^{-a/d}\ge1-a/d$,
\begin{equation}\label{eq:phase-correction-reduction}
 \frac{C(d,2)}{\rho^d}
 \ge\frac\pi4\left(1-\frac ad-\rho^{-2d}e^{3a/d}\right).
\end{equation}
The sequence $d\rho^{-2d}e^{3a/d}$ decreases for $d\ge3$, because the
ratio of consecutive terms is
\[
 \frac{d+1}{d}\rho^{-2}
       \exp\!\left(-\frac{3a}{d(d+1)}\right)
 <\frac4{3\rho^2}<1.
\]
Hence $d\rho^{-2d}e^{3a/d}\le3\rho^{-6}e^a$.  The elementary bounds
\[
 a<\frac{39}{125},\qquad e^a<\frac{11}{8},\qquad
 \rho^6=99+70\sqrt2>197
\]
imply
\begin{equation}\label{eq:phase-correction-rational-check}
 a+3\rho^{-6}e^a
 <\frac{39}{125}+\frac{33}{1576}<\frac13.
\end{equation}
The estimate $a<39/125$ in \eqref{eq:phase-correction-rational-check} follows from $\pi<22/7$ and $\sqrt7>66/25$.
For the second, $a<5/16$ and
\[
 e^x\le1+x+\frac{x^2}{2(1-x/3)}\qquad(0\le x<3)
\]
follow by summing the exponential series and using
$k!\ge2\cdot3^{k-2}$ for $k\ge2$; the displayed upper bound at $x=5/16$
is smaller than $11/8$.
Combining \eqref{eq:phase-correction-reduction} and
\eqref{eq:phase-correction-rational-check} proves
\eqref{eq:simple-improved-lower}.

\par\medskip\noindent\textbf{Step 3. Consecutive Chebyshev lower bound.}\par\smallskip\nopagebreak[4]
For each integer $d\ge3$, put $k:=\lfloor d/2\rfloor$, $\ell:=d-k$, and
\begin{equation}\label{eq:all-degree-radius}
 a_d:=\cos\frac\pi d+\sqrt{1+\cos^2\frac\pi d},\qquad
 b_d:=\rho a_d,\qquad
 \tau_d:=\sqrt{1+\cos^2\frac\pi d}.
\end{equation}
For every $\kappa>0$,
\begin{align}
 \frac{C(d,2)}{\rho^d}
 \ge\frac{2+\kappa}{4H_\kappa}
 \Bigg[&\frac{2+\kappa\cos\bigl((\ell-k)\pi/(2d)\bigr)}{2+\kappa}
       \exp\!\left(-\frac{\pi^2}{6d\tau_d}\right)\notag\\
 &-\frac{\kappa}{2+\kappa}\bigl(b_d^{-k}+b_d^{-\ell}\bigr)
       -b_d^{-d}\Bigg].
 \label{eq:all-degree-finite-bound}
\end{align}
Fix $d\ge3$. {\color{black}Define
\[
\alpha,\gamma:(0,2\pi)\to[-\pi/2,\pi/2],\qquad
\sigma:(0,2\pi)\to\{-1,1\},
\]
by}
\[
 (\alpha(\varphi),\gamma(\varphi),\sigma(\varphi)):=
 \begin{cases}
 (-\varphi/2,\,\pi/2-\varphi/2,\,1),&0<\varphi\le\pi,\\
 (\pi-\varphi/2,\,\pi/2-\varphi/2,\,-1),&\pi<\varphi<2\pi.
 \end{cases}
\]
These functions satisfy
\begin{equation}\label{eq:all-degree-phase-identities}
 |\alpha|,|\gamma|\le\pi/2,\qquad
 \gamma-\alpha=\sigma\pi/2,\qquad
 e^{i\varphi}e^{2i\alpha}=1.
\end{equation}
For $\xi\in[-\pi/2,\pi/2]$, set
\begin{equation}\label{eq:all-degree-evaluation-maps}
 \begin{split}
 \widehat r_d(\xi)&:=\cos(2\xi/d)+\sqrt{1+\cos^2(2\xi/d)},\\
 \widehat w_d(\xi)&:=\widehat r_d(\xi)e^{i(\pi/2+2\xi/d)},\qquad
 \widehat z_d(\xi):=\frac{\widehat w_d(\xi)+\widehat w_d(\xi)^{-1}}2.
 \end{split}
\end{equation}
Thus $\widehat r_d:[-\pi/2,\pi/2]\to[a_d,\rho]$ and
$\widehat w_d:[-\pi/2,\pi/2]\to\C\setminus\{0\}$.
The identity $\widehat r_d-\widehat r_d^{-1}=2\cos(2\xi/d)$ gives
\[
 4|\widehat z_d(\xi)|^2
 =2+4\cos^2(2\xi/d)-2\cos(4\xi/d)=4,
\]
so $\widehat z_d:[-\pi/2,\pi/2]\to\T$.

For the phases $\varphi_{m,j}$ in \eqref{eq:full-circle-midpoints}, write
$\alpha_j=\alpha(\varphi_{m,j})$, $\gamma_j=\gamma(\varphi_{m,j})$, and
$\sigma_j=\sigma(\varphi_{m,j})$. Define $G_{m,d}:\C^{2m}\to\C$ by
\begin{equation}\label{eq:all-degree-rotated-polynomial}
 G_{m,d}(x_1,y_1,\ldots,x_m,y_m)
 :=\frac1m\sum_{j=1}^m e^{i\varphi_{m,j}}
                  Q_{d,\kappa,\sigma_j}(x_j,y_j).
\end{equation}
Its coefficient of $x_1^d$ is $2^{d-2}e^{i\varphi_{m,1}}/m$, so it
has degree $d$. By
Lemma~\ref{lem:consecutive-Chebyshev-image},
\begin{equation}\label{eq:all-degree-real-norm}
 \|G_{m,d}\|_{[-1,1]^{2m}}
 \le\max_{w_j\in K_\kappa}
      \left|\frac1m\sum_{j=1}^m e^{i\varphi_{m,j}}w_j\right|
 \le H_\kappa+\frac{(2+\kappa)\pi}{m}.
\end{equation}
The second inequality is the quantitative midpoint estimate in \eqref{eq:quadratic-rotated-norm-limit}.

Evaluate the $j$th pair of variables at
$(\widehat z_d(\alpha_j),\widehat z_d(\gamma_j))$. To estimate one summand, omit the index
$j$ and put
\[
 R_1:=\widehat r_d(\alpha),\qquad R_2:=\widehat r_d(\gamma),\qquad
 w_1:=\widehat w_d(\alpha),\qquad w_2:=\widehat w_d(\gamma).
\]
For each nonnegative integer $h$, the Chebyshev--Joukowski identity \cite[Chap.~1]{MasonHandscomb} reads
\[
 T_h(\widehat z_d(\alpha))=\frac{w_1^h+w_1^{-h}}2,
 \qquad
 T_h(\widehat z_d(\gamma))=\frac{w_2^h+w_2^{-h}}2.
\]
{\color{black}After multiplication by $i^{-d}e^{i\varphi}$, the leading terms of the
first difference in \eqref{eq:consecutive-Chebyshev-polynomial} become
\[
i^{-d}e^{i\varphi}\frac{w_1^d-w_2^d}{4}
=\frac{R_1^d+R_2^d}{4}.
\]
The definitions of $w_1,w_2$ give}
\[
i^{-d}w_1^d=R_1^d e^{2i\alpha},
\qquad
i^{-d}w_2^d=R_2^d e^{2i\gamma}.
\]
By \eqref{eq:all-degree-phase-identities},
$e^{i\varphi}e^{2i\alpha}=1$ and
$e^{2i\gamma}=e^{2i\alpha}e^{i\sigma\pi}
=-e^{2i\alpha}$, which proves the displayed identity.

For the mixed leading terms, $k+\ell=d$ and
$\gamma-\alpha=\sigma\pi/2$ give
\[
\frac{2(k\alpha+\ell\gamma)}{d}
=\alpha+\gamma
 +\frac{(\ell-k)(\gamma-\alpha)}{d}
=\alpha+\gamma
 +\sigma\frac{(\ell-k)\pi}{2d}.
\]
Also, \eqref{eq:all-degree-phase-identities} implies
\[
e^{i\varphi}e^{i(\alpha+\gamma)}
=e^{i(\gamma-\alpha)}
=\sigma i.
\]
Consequently,
\[
i^{-d}e^{i\varphi}(-i\sigma)w_1^kw_2^\ell
=R_1^kR_2^\ell
 \exp\!\left(i\sigma\frac{(\ell-k)\pi}{2d}\right).
\]
Interchanging $k$ and $\ell$ reverses the remaining phase:
\[
i^{-d}e^{i\varphi}(-i\sigma)w_1^\ell w_2^k
=R_1^\ell R_2^k
 \exp\!\left(-i\sigma\frac{(\ell-k)\pi}{2d}\right).
\]
Taking real parts gives
\begin{align}
&\operatorname{Re}\left(
i^{-d}e^{i\varphi}\frac{-i\sigma\kappa}{8}
       (w_1^kw_2^\ell+w_1^\ell w_2^k)\right)\notag\\
&\qquad=\frac{\kappa}{8}
 \cos\!\left(\frac{(\ell-k)\pi}{2d}\right)
       (R_1^kR_2^\ell+R_1^\ell R_2^k).
\label{eq:all-degree-leading-alignment}
\end{align}

The inverse terms in the first difference have total modulus at most
$a_d^{-d}/2$. In each mixed product, the three terms other than the
leading one have total modulus at most
\[
 \frac14\bigl(\rho^\ell a_d^{-k}+\rho^k a_d^{-\ell}+a_d^{-d}\bigr).
\]
{\color{black}Since the sum of the two mixed products is multiplied by $\kappa/2$,
their discarded terms contribute at most
\[
 \frac{\kappa}{4}
 \bigl(\rho^\ell a_d^{-k}+\rho^k a_d^{-\ell}+a_d^{-d}\bigr).
\]}
Thus the total discarded contribution is bounded by
\begin{equation}\label{eq:all-degree-error}
 E_{d,\kappa}:=
 \frac\kappa4\bigl(\rho^\ell a_d^{-k}+\rho^k a_d^{-\ell}\bigr)
       +\frac{2+\kappa}{4}a_d^{-d}.
\end{equation}
Combining \eqref{eq:all-degree-leading-alignment} and \eqref{eq:all-degree-error} produces a lower bound for
$\operatorname{Re}(i^{-d}G_{m,d})$ at the chosen toral point. Applying
{\color{black}\eqref{eq:full-cube-lift}} and \eqref{eq:all-degree-real-norm}, and then
letting $m\to\infty$ gives
\begin{equation}\label{eq:all-degree-integral-bound}
 C(d,2)\ge\frac1{4H_\kappa}\left[\int_0^{2\pi}\left(R_1^d+R_2^d+\frac\kappa2\cos\!\left(\frac{(\ell-k)\pi}{2d}\right)(R_1^kR_2^\ell+R_1^\ell R_2^k)\right)\frac{d\varphi}{2\pi}-4E_{d,\kappa}\right].
\end{equation}
Here $R_1=\widehat r_d(\alpha(\varphi))$ and
$R_2=\widehat r_d(\gamma(\varphi))$. The integrands are bounded and
piecewise continuous, which justifies the midpoint-sum limit.

To estimate the integral, for $|\xi|\le\pi/2$ one has
\begin{align}
 0\le\log\rho-\log \widehat r_d(\xi)
 &=\int_{\cos(2\xi/d)}^1\frac{du}{\sqrt{1+u^2}}\notag\\
 &\le\frac{1-\cos(2\xi/d)}{\tau_d}
 \le\frac{2\xi^2}{d^2\tau_d}.
 \label{eq:all-degree-log-radius}
\end{align}
Changing variables on the two intervals in the definition of $\alpha$,
and on the full interval in the definition of $\gamma$, gives
\begin{equation}\label{eq:all-degree-phase-second-moments}
 \frac1{2\pi}\int_0^{2\pi}\alpha(\varphi)^2\,d\varphi
 =\frac1{2\pi}\int_0^{2\pi}\gamma(\varphi)^2\,d\varphi
 =\frac{\pi^2}{12}.
\end{equation}
For any nonnegative integers $p,q$ with $p+q=d$,
\eqref{eq:all-degree-log-radius} and Jensen's inequality \cite[Chap.~III]{HardyLittlewoodPolya} therefore imply

\[
 \frac1{2\pi}\int_0^{2\pi}
 \frac{2\bigl(p\alpha(\varphi)^2+q\gamma(\varphi)^2\bigr)}
 {d^2\tau_d}\,d\varphi
 =\frac{\pi^2}{6d\tau_d}.
\]
Hence
\begin{align}
 &\frac1{2\pi}\int_0^{2\pi}
 \widehat r_d(\alpha(\varphi))^p\widehat r_d(\gamma(\varphi))^q\,d\varphi\notag\\*
 &\quad\ge\rho^d\frac1{2\pi}\int_0^{2\pi}
 \exp\!\left(-\frac{2(p\alpha(\varphi)^2+q\gamma(\varphi)^2)}
 {d^2\tau_d}\right)\,d\varphi\notag\\*
 &\quad\ge\rho^d\exp\!\left(-\frac{\pi^2}{6d\tau_d}\right).
 \label{eq:all-degree-Jensen}
\end{align}
Using this estimate for $(p,q)=(d,0),(0,d),(k,\ell),(\ell,k)$ in
\eqref{eq:all-degree-integral-bound}, and using
\[
 \frac{4E_{d,\kappa}}{(2+\kappa)\rho^d}
 =\frac\kappa{2+\kappa}\bigl(b_d^{-k}+b_d^{-\ell}\bigr)+b_d^{-d},
\]
gives \eqref{eq:all-degree-finite-bound}.
\par\medskip\noindent\textbf{Step 4. The choice $\kappa=2/5$.}\par\smallskip\nopagebreak[4]
For $\kappa=2/5$, one has $\kappa/(2+\kappa)=1/6$. It is enough to show
\begin{equation}\label{eq:simple-all-degree-bound}
 C(d,2)\ge\beta\left(1-\frac6{5d}\right)\rho^d
 \qquad(d\ge3).
\end{equation}
In \eqref{eq:quadratic-mean-support}, substitution of $\kappa=2/5$ gives
\[
 \frac{2+\kappa}{4H_\kappa}
 =\frac{15\pi}{2\bigl(5\sqrt{26}+\log(5+\sqrt{26})\bigr)}=\beta,
 \qquad \frac\kappa{2+\kappa}=\frac16.
\]
Set
\[
 x:=\frac{\pi^2}{6d\tau_d},\qquad
 y:=\frac16\left(1-\cos\frac{(\ell-k)\pi}{2d}\right).
\]
Then $x\ge0$, $0\le y\le1$, and $(1-y)e^{-x}\ge1-x-y$. Moreover,
$1-\cos t\le t^2/2$ and $\ell-k\le1$ give
$y\le\pi^2/(48d^2)$. Therefore \eqref{eq:all-degree-finite-bound} yields
\begin{equation}\label{eq:all-degree-elementary-correction}
 \frac{C(d,2)}{\rho^d}
 \ge\beta\left(1-\frac{\pi^2}{6d\tau_d}
                 -\frac{\pi^2}{48d^2}
                 -\frac{b_d^{-k}+b_d^{-\ell}}6-b_d^{-d}\right).
\end{equation}

Suppose first that $d\ge13$. Since $\pi^2<10$,
\[
 \cos(\pi/d)\ge1-\frac{\pi^2}{2d^2}
 >1-\frac5{169}=\frac{164}{169}.
\]
Squaring shows that $\tau_d>39/28$. Since
$\rho>12/5$, $\cos(\pi/d)>19/20$, and $\tau_d>11/8$, it follows that
$b_d>\frac{12}{5}(\frac{19}{20}+\frac{11}{8})>5$.
Here $k\ge6$, $\ell\ge k$, and $d\le2k+1$, so
\begin{align}
 d\left(\frac{b_d^{-k}+b_d^{-\ell}}6+b_d^{-d}\right)
 &\le(2k+1)\left(\frac{5^{-k}}3+5^{-2k}\right)\notag\\
 &\le13\left(\frac1{3\cdot5^6}+\frac1{5^{12}}\right)
 <\frac1{1000}.
 \label{eq:all-degree-tail-check}
\end{align}
The second inequality follows because $(2k+1)5^{-k}$ and
$(2k+1)5^{-2k}$ decrease for $k\ge6$. Therefore
\[
 \frac{\pi^2}{6\tau_d}+\frac{\pi^2}{48d}
 +d\left(\frac{b_d^{-k}+b_d^{-\ell}}6+b_d^{-d}\right)
 <\frac{247}{25}\frac{28}{234}
   +\frac{247}{15600}+\frac1{1000}
 =\frac{21583}{18000}<\frac65.
\]
The first strict inequality uses $\pi^2<(22/7)^2<247/25$.

For $3\le d\le12$, use \eqref{eq:simple-improved-lower}.
The estimates $\sqrt{26}>637/125$ and $\log10>23/10$ give
\[
 5\sqrt{26}+\log(5+\sqrt{26})>\frac{1389}{50}>\frac{250}{9},
 \qquad \beta<\frac{27\pi}{100}.
\]
For the logarithmic bound, the exponential series gives
\[
 e<\sum_{n=0}^{6}\frac1{n!}+\frac{1}{7!}\frac{1}{1-1/8}<\frac{87}{32};
\]
the exponential estimate used above gives
$e^{3/10}\le27/20$. Thus
$e^{23/10}<(87/32)^2(27/20)<10$.
It follows that
\[
 \beta\left(1-\frac6{5d}\right)
 <\frac\pi4\frac{27}{25}\left(1-\frac6{5d}\right)
 \le\frac\pi4\left(1-\frac1{3d}\right).
\]
The final comparison in this display is equivalent to $30d\le361$.
Hence \eqref{eq:simple-all-degree-bound} holds for $3\le d\le12$.
Finally,
\[
\sqrt{26}<\frac{21}{4},
\qquad
5+\sqrt{26}<\frac{41}{4}<\left(\frac83\right)^3<e^3,
\]
Here $e>1+1+1/2+1/6=8/3$ by the exponential series.
Hence
\[
5\sqrt{26}+\log(5+\sqrt{26})
<
\frac{105}{4}+3
=
\frac{117}{4}
<30.
\]
Therefore $\beta>\pi/4$, as asserted in \eqref{eq:asymptotic-beta}.

The upper estimate in \eqref{eq:improved-exponential-sandwich} follows
directly from the classical cube complexification bound
\eqref{eq:Klimek-cube-complexification} and the multiaffine vertex identity
\eqref{eq:multiaffine-real-cube-norm}; equivalently, it is
\eqref{eq:known-binary-remez}. Combining it with
\eqref{eq:simple-all-degree-bound} proves the two-sided estimate of
Theorem~\ref{thm:sharp-binary-cost}.

Dividing by $\rho^d$ and taking lower and upper limits gives
\eqref{eq:improved-normalized-limits}. In particular, the normalized
constants are bounded away from zero by the explicit constant $\beta$.
Taking $d$th roots yields \eqref{eq:sharp-exponential-rate}.

\section*{Notation}
\begingroup
\small
\renewcommand{\arraystretch}{1.0}
\begin{tabular}{@{}p{.22\textwidth}p{.73\textwidth}@{}}
\toprule
Symbol & Definition or reference \\
\midrule
$C(d,2)$ & Optimal binary dimension-free constant in \eqref{eq:uniform-transfer-constant}. \\
$\Gamma_d$ & Unrestricted real-cube complexification constant defined in \eqref{eq:Gamma-definition}. \\
$\mathcal P_{N,d}^{\mathrm{mult}}$ & Multiaffine polynomials on $\C^N$ of total degree at most $d$. \\
$\rho=1+\sqrt2$ & Exponential base used in Theorem~\ref{thm:sharp-binary-cost}. \\
$\beta$ & Constant in \eqref{eq:asymptotic-beta}. \\
$T_d$ & Chebyshev polynomial of the first kind. \\
$U_{n,k}$ & Normalized elementary symmetric polynomial in \eqref{eq:normalized-elementary-symmetric}. \\
$E(H)$ & Coefficient error sum in \eqref{eq:general-block-error}. \\
$m_n$ & Boolean coordinate mean in \eqref{eq:Boolean-empirical-mean}. \\
$\theta_{m,j}$, $\varphi_{m,j}$ & Midpoint phases in \eqref{eq:phase-midpoint-nodes} and \eqref{eq:full-circle-midpoints}. \\
$\alpha$, $\gamma$, $\sigma$ & Phase maps defined immediately before \eqref{eq:all-degree-phase-identities}. \\
$r_d$, $w_d$, $z_d$ & Radius and evaluation maps in \eqref{eq:phase-radius} and \eqref{eq:phase-Joukowski-points}. \\
$K_\kappa$, $h_\kappa$, $H_\kappa$ & Quadratic image, support function, and its mean in \eqref{eq:quadratic-image-set}--\eqref{eq:quadratic-support-definition}. \\
$a_d$, $b_d$, $\tau_d$ & Radius and auxiliary constants in \eqref{eq:all-degree-radius}. \\
$Q_{d,\kappa,\sigma}$ & Chebyshev combination in \eqref{eq:consecutive-Chebyshev-polynomial}. \\
$\widehat r_d$, $\widehat w_d$, $\widehat z_d$ & Evaluation maps in \eqref{eq:all-degree-evaluation-maps}. \\
\bottomrule
\end{tabular}
\endgroup

\section*{Declarations}

\noindent\textbf{Funding.} D. N\'u\~nez-Alarc\'on and J. Santos are supported by Grants No.~406457/2023-9 and No.~403964/2024-5 from the Conselho Nacional de Desenvolvimento Cient\'ifico e Tecnol\'ogico (CNPq, Brazil). These grants correspond, respectively, to Calls CNPq/MCTI No.~10/2023 and MCTI/CNPq No.~16/2024. J. Santos is also supported by CNPq Grant No.~305655/2025-6.

\medskip

\noindent\textbf{Competing interests.}
The authors declare that they have no competing interests.

\medskip
\noindent\textbf{Data availability.}
No data were generated or analyzed in this study.

\medskip
\noindent\textbf{Declaration of generative AI and AI-assisted technologies in the manuscript preparation process.}
The authors conceived and initiated the project and developed the mathematical results. During the preparation of the work, OpenAI's ChatGPT was used to explore alternative approaches, test possible lines of argument, and assist in refining quantitative estimates, exposition, and presentation. All mathematical claims, proofs, and conclusions were reviewed and validated by the authors, who take full responsibility for the content of the manuscript.


\begin{thebibliography}{99}
\small


\bibitem{AronBeauzamyEnflo}
R.~M. Aron, B. Beauzamy, and P. Enflo,
\emph{Polynomials in many variables: real vs. complex norms},
J. Approx. Theory \textbf{74} (1993), no.~2, 181--198,
\href{https://doi.org/10.1006/jath.1993.1060}{doi:10.1006/jath.1993.1060}.


\bibitem{BKSVZ}
L. Becker, O. Klein, J. Slote, A. Volberg, and H. Zhang,
\emph{Dimension-free discretizations of the uniform norm by small product sets},
Invent. Math. \textbf{239} (2025), no.~2, 469--503,
\href{https://doi.org/10.1007/s00222-024-01306-9}{doi:10.1007/s00222-024-01306-9}.

\bibitem{Conway}
J.~B. Conway,
\emph{Functions of One Complex Variable I},
2nd ed., Graduate Texts in Mathematics, vol.~11, Springer-Verlag, New York, 1978,
\href{https://doi.org/10.1007/978-1-4612-6313-5}{doi:10.1007/978-1-4612-6313-5}.

\bibitem{DefantMastyloPerez}
A. Defant, M. Masty\l o, and A. P\'erez,
\emph{On the Fourier spectrum of functions on Boolean cubes},
Math. Ann. \textbf{374} (2019), no.~1--2, 653--680,
\href{https://doi.org/10.1007/s00208-018-1756-y}{doi:10.1007/s00208-018-1756-y}.

\bibitem{DineenBook}
S. Dineen,
\emph{Complex Analysis on Infinite-Dimensional Spaces},
Springer Monographs in Mathematics, Springer-Verlag, London, 1999.

\bibitem{HardyLittlewoodPolya}
G.~H. Hardy, J.~E. Littlewood, and G. P\'olya,
\emph{Inequalities},
2nd ed., Cambridge University Press, Cambridge, 1952.

\bibitem{Klimek}
M. Klimek,
\emph{Metrics associated with extremal plurisubharmonic functions},
Proc. Amer. Math. Soc. \textbf{123} (1995), no.~9, 2763--2770,
{\color{black}\href{https://doi.org/10.1090/S0002-9939-1995-1307539-3}{doi:10.1090/S0002-9939-1995-1307539-3}.}


\bibitem{Lacruz}
M. Lacruz,
\emph{Norms of polynomials and capacities on Banach spaces},
Math. Scand. \textbf{85} (1999), no.~2, 271--277,
\href{https://doi.org/10.7146/math.scand.a-18276}{doi:10.7146/math.scand.a-18276}.

\bibitem{MasonHandscomb}
J.~C. Mason and D.~C. Handscomb,
\emph{Chebyshev Polynomials},
Chapman \& Hall/CRC, Boca Raton, 2003.

\bibitem{MunozSarantopoulosTonge}
{\color{black}
G.~A. Mu\~noz, Y. Sarantopoulos, and A. Tonge,
\emph{Complexifications of real Banach spaces, polynomials and multilinear maps},
Studia Math. \textbf{134} (1999), no.~1, 1--33,
\href{https://doi.org/10.4064/sm-134-1-1-33}{doi:10.4064/sm-134-1-1-33}.
}


\bibitem{NSV2003}
F. Nazarov, M. Sodin, and A. Volberg,
\emph{Local dimension-free estimates for volumes of sublevel sets of analytic functions},
Israel J. Math. \textbf{133} (2003), 269--283,
{\color{black}\href{https://doi.org/10.1007/BF02773070}{doi:10.1007/BF02773070}.}

\bibitem{Ransford}
T.~Ransford,
\emph{Potential Theory in the Complex Plane},
London Mathematical Society Student Texts, vol.~28,
Cambridge University Press, Cambridge, 1995.


\bibitem{Rodriguez2025}
J.~T. Rodr\'iguez,
\emph{On the norm of the complexification of polynomials},
Studia Math. \textbf{282} (2025), 1--23,
\href{https://doi.org/10.4064/sm230615-15-4}{doi:10.4064/sm230615-15-4}.


\bibitem{Siciak1981}
J. Siciak,
\emph{Extremal plurisubharmonic functions in $\mathbb{C}^n$},
Ann. Polon. Math. \textbf{39} (1981), no.~1, 175--211.

\bibitem{SloteRemez}
J. Slote, A. Volberg, and H. Zhang,
\emph{A dimension-free Remez-type inequality on the polytorus},
Discrete Anal. (2025), Paper No.~4, 21~pp.,
\href{https://doi.org/10.19086/da.137968}{doi:10.19086/da.137968}.

\bibitem{Visser}
C. Visser,
\emph{A generalization of Tchebycheff's inequality to polynomials in more than one variable},
Nederl. Akad. Wetensch. Proc. \textbf{49} (1946), 455--456;
Indag. Math. \textbf{8} (1946), 310--311.

\end{thebibliography}
\end{document}